\def\@footnotecolor{red}
\patchcmd{\@footnotemark}{\hyper@linkstart{link}}{\hyper@linkstart{footnote}}{}{}
\newtheorem{theorem}{Theorem}[section]
\newtheorem{lemma}[theorem]{Lemma}
\newtheorem{prop}[theorem]{Proposition}
\newtheorem{conjecture}[theorem]{Conjecture}
\theoremstyle{definition}
\newtheorem{definition}[theorem]{Definition}
\newtheorem{example}[theorem]{Example}
\theoremstyle{remark}
\newtheorem*{remark}{Remark}
\numberwithin{equation}{section}
\DeclareMathOperator*{\arginf}{arginf}
\DeclareMathOperator{\id}{id}
\DeclareMathOperator{\pr}{pr}
\DeclareMathOperator{\dist}{dist}
\DeclareMathOperator{\vol}{Vol}
\DeclareMathOperator{\dvol}{dVol}
\DeclareMathOperator{\eqdist}{\overset{d}{=}}
\DeclareMathOperator{\St}{St}
\DeclareMathOperator{\GL}{GL}
\title[Convergence of the EKF on Stiefel Manifolds]{On the Convergence of the Extended Kalman Filter on Stiefel Manifolds when Observing a Constant Particle with Measurement Errors}
\author[Figueras]{Jordi-Llu\'{i}s Figueras}
\author[Persson]{Aron Persson}
\author[Viitasaari]{Lauri Viitasaari}
\let\newtitle\@title
\let\newauthor\@author
\pgfplotsset{compat=1.18}
\begin{document}

\begin{abstract}
In this paper we first introduce the setting of filtering on Stiefel manifolds. Then, assuming the underlying system process is constant, the convergence of the extended Kalman filter with Stiefel manifold-valued observations is proved. This corresponds to the case where one has measurement errors that needs to be filtered. Finally, some simulations are presented for a selected few Stiefel manifolds and the speed of convergence is studied.

\end{abstract}

\maketitle
\thispagestyle{empty}

\tableofcontents

\section{Introduction}

In the field of radiological MIMO-systems, it is important to be able to measure and make predictions of a state describing a system of antennas and receivers. One naturally models this state of $n$ antennas and $k$ receivers as an $n$ by $k$ matrix. Then, when observing the state, one measures the "orthogonal part" of the $n$ by $k$ matrix. This observation is done due to what is referred to as orthogonal frequency-division multiplexing in radiology allows for faster data transfers, see \cite{ergen2009mobile}. In other words, one considers noisy measurements that live on Stiefel manifolds, see e.g. \cites{hussien2014multi,pitaval2013joint,seddik2017multi}. Other contexts where Stiefel manifolds play an important role are computer vision, medical image analysis and machine learning, see \cites{cetingul2009intrinsic,cheema2024stiefelgen,kanamori2012non,lui2012advances,pham2008robust}. The setting of radiological MIMO systems motivates us to view the filtering problem as follows: Suppose $k$ receivers move around receiving signals and sending signals to $n$ antennas. The state of the system, abstractly referred to as the underlying process, moves around following some SDE with known parameters. Since the space of $n$ by $k$ matrices is linear, this SDE may be solved using classical methods. However, since the values of the measurements live in the space of orthogonal $n$ by $k$ matrices, one has Stiefel manifold-valued measurements. Thus, one may then ask: How to estimate $\mathbb{E}[X_t\mid Z]$, the state $X_t$ given $Z$ corresponding to a Stiefel manifold-valued measurement of the state $X_t$? 

Even if we assume that the underlying process is the solution to a linear SDE, this filtering problem is non-linear. In our case it is the measurement process that is explicitly non-linear. The measurements consist of mapping a point in $\mathbb{R}^n$ to the nearest point in a subspace of $\mathbb{R}^n$, see Equation \eqref{eq:filteringSDE} for the precise formulation. Such a mapping is only affine if the subspace itself is affine. Since the Stiefel manifolds are never affine subspaces, but closed manifolds, the projection map is strictly non-linear. This kind of setup of filtering equation is strictly different to the type of non-linear filtering one typically see, e.g. in \cite{bain2009fundamentals}, where the (potentially non-linear) measurements are linearly combined with the noise. In our case, the projected state of the underlying process is non-linearly mixed with the noise, so that the noisy measurement is also a point on a Stiefel manifold.

With this in mind, we introduced in \cite{figueras2025EKF} a version of the extended Kalman filter with Stiefel manifold-valued measurements. The classical extended Kalman filter, which was first introduced by Schmidt and others in \cites{mclean1962optimal,schmidt1966application,smith1962application}, is an extension of the original linear Kalman filter developed in \cites{kalman1960new,kalman1960contributions,kalman1961new} by K{\'a}lm{\'a}n and Bucy. In a nutshell, the Kalman filter gives the best estimate given an affine measurement which is weighed against a prediction (from the known affine dynamics). This is done mainly using that the propagation of the normal distributions through the linear filtering equation remain normal and easy formulas are derived. However, if the filtering equation is non-linear, as it is in our case, one has to settle with approximations.

Even if applications to MIMO systems and radio signals are important and they motivate why the Stiefel manifolds are reasonable objects on which projected data are considered, this paper has many other potential areas for application, such as problems in meteorology and biology that consider spherically-valued data. The spheres $\mathbb{S}^n$ are a special case of the Stiefel manifolds since $\mathbb{S}^n = \St_{n+1,1}(\mathbb{R})$. As such, the present article may also be useful to anyone interested in directional statistics on the spheres. The study of directional statistics has found itself embedded into quite significant number of disciplines, see \cites{ley2017modern,mardia2009directional}.

The area of directional statistics was popularized by Fischer in \cite{fisher1953dispersion} (see also \cite{mardia2025fisher} for a modern historical background) who considered spherically distributed data on rock fragments, in order to measure ancient Earth's magnetic field. Since then, directional statistics has been used in the following non-exhaustive list of areas of research; in ethology modelling animal movements \cites{mastrantonio2022modeling,schnute1992statistical}, in meteorology measuring changes in the directions of winds and waves \cites{bowers2000directional,hanson2009pacific,soukissian2014probabilistic}, and in medicine for analysing the temporal structure of time of birth \cite{demir2019application} or frequency of cancer \cite{karaibrahimoglu2021circular} or for self-evaluating health outcomes \cite{craig2010different}.

This article complements \cite{figueras2025EKF} where the extended Kalman filter with Stiefel manifolds is introduced, accompanied with simulation studies showing performance. In particular, we take a step towards theoretical guarantees of the convergence by showing that whenever the underlying process is a constant random variable, our algorithm converges. This corresponds to the case where instead of the true projection, one has measurement errors that has to be filtered out. 

The rest of the article is organized as follows. In Section \ref{sec:EKF} we introduce our setting and the results, while proofs are postponed to Section \ref{sec:proofs}. In Section \ref{sec:simulations} an abundance of promising simulations for a selection of Stiefel manifolds are provided. These simulations demonstrate for measurements with sufficiently small noise, the rate of convergence is very close to that of the classical linear Kalman filter.  On the other hand, no estimate for the rate of convergence can be easily attained if the measurement noise is large. This is due to the non-linear nature of the problem which isn't fully captured by the linearization process of the extended Kalman filter.

\section{The extended Kalman filter on Stiefel manifolds}
\label{sec:EKF}

\subsection{Background}
\label{sec:background}
Let $\mathbb{K}$ either denote the field $\mathbb{R}$ or $\mathbb{C}$ and let $\mathrm{M}_{n,k}(\mathbb{K})$ denote the $\mathbb{K}$-vector space of $n$ by $k$ matrices. For any $X\in \mathrm{M}_{n,k}(\mathbb{K})$ we shall denote the conjugate transpose of $X$ by $X^*=\overline{X}^T$. In the case that $\mathbb{K}=\mathbb{R}$ then the conjugate transpose is reduced to the real transpose, i.e. $X^*=X^T$.

Consider $X_t\in \mathrm{M}_{n,k}(\mathbb{K})$, the stochastic process satisfying the linear SDE
\begin{equation}
    \dd X_t = A X_t \dd t + \nu \dd B_t, \qquad X_0 \eqdist N(\mu_0,\sigma_0^2 I_{\mathrm{M}_{n,k}(\mathbb{K})})
    \label{eq:linSDEXt}
\end{equation}
where $B_t$ is the Brownian motion on $\mathrm{M}_{n,k}(\mathbb{K})$, $\nu \in \mathbb{K}$ and $A:\mathbb{K}^n\rightarrow \mathbb{K}^n$ a linear and anti-symmetric map. 

Suppose now that there is a noisy measurement process $Z_m$ which records the first $k$ directions of $X_t$, i.e.
\[
Z_m\in \St_{n,k} = \left\{ X\in \mathrm{M}_{n,k}(\mathbb{K}): X^*X= I_n \right\},
\]
the compact Stiefel manifold of $n$ by $k$ matrices.

Throughout we shall consider $\mathrm{M}_{n, n}(\mathbb{K})$ and $\St_{n,k}(\mathbb{K})$, equipped with the so-called canonical metric $g_c$. On $\mathrm{M}_{n, n}(\mathbb{K})$, let $g_c$ denote the Frobenius inner product on $\mathrm{M}_{n,n}(\mathbb{K})$ defined as
\[
g_c(V,W) = \trace V^* W
\]
for all $V,W\in \mathrm{M}_{n.n}(\mathbb{K})$. Since $\GL_{n,n}(\mathbb{K}) \subset \mathrm{M}_{n,n}(\mathbb{K})$ is open we get immediately the canonical metric on $\GL_{n,n}(\mathbb{K})$. Moreover, the canonical metric on $\St_{n,k}(\mathbb{K})$ at a point $X\in \St_{n,k}(\mathbb{K})$ is then defined as follows:
\[
g_c(X)(V,W)= \trace\left(V^* \left(I_n- \frac{1}{2}X X^*\right)W\right),
\]
for all $V,W \in T_{X}\St_{n,k}(\mathbb{K})$. These choices of metrics turns the following submersion
\[
\iota :  \GL_{n,n}(\mathbb{K}) \rightarrow \St_{n,k} (\mathbb{K}), \qquad \iota (X) = \pr(X \cdot I_{n,k})
\]
into an isometry.

Given any maximal rank $n$ by $k$ matrix, i.e. $X\in \GL_{n,k}(\mathbb{K})$, there is a unique closest point in $\St_{n,k}(\mathbb{K})$ if the induced norm from the metric is invariant under $O(n)$ (resp. $U(n)$) action (This was originally shown by Fan and Hoffman \cite{fan1955some}, see also \cite[Theorem 8.4]{higham2008functions}, and  \cite{laszkiewicz2006approximation}). Our case applies because the canonical metric $g_c$ is compatible with the Lie group action. This closest point may be computed by the polar decomposition: given $X\in \GL_{n,k}(\mathbb{K})$ then there is a unique orthogonal matrix, $U\in  \St_{n,k}(\mathbb{K})$, and unique symmetric matrix, $S\in \mathrm{M}_{k,k}(\mathbb{K})$, such that $X=US$. The symmetric matrix $S$ may be computed using the relation
\[
S= (X^* X)^{1/2}
\]
which is invertible since $X$ is of full rank. Hence the projection map 
\[
{\pr:\GL_{n,k}(\mathbb{K}) \rightarrow \St_{n,k}(\mathbb{K})}
\]
may be explicitly computed as
\begin{equation}
\pr(X) = X(X^*X)^{-1/2}.
\label{eq:projection}
\end{equation}

Thanks to Definition \eqref{eq:projection} the following lemma is immediately deducible.

\begin{lemma}
\label{lem:projectioncommuteswithaction}
    Let $X\in \GL_{n,k}(\mathbb{R})$ (resp. $\GL_{n,k}(\mathbb{C})$) and let $\phi \in O(n)$ (resp. $U(n)$) be arbitrary. Then it holds that 
    \[
    \pr(\phi X)= \phi \pr(X).
    \]
    
\end{lemma}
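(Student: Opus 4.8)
The plan is to use the explicit formula for the projection map given in Equation \eqref{eq:projection}, namely $\pr(X) = X(X^*X)^{-1/2}$, and verify directly that it is equivariant under the orthogonal (resp.\ unitary) group action on the left. The key observation is that the symmetric positive-definite factor $(X^*X)^{1/2}$ is unchanged when $X$ is replaced by $\phi X$ for $\phi \in O(n)$ (resp.\ $U(n)$).

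First I would compute $(\phi X)^*(\phi X) = X^*\phi^*\phi X$, and use $\phi^*\phi = I_n$ (the defining property of $O(n)$, resp.\ $U(n)$) to conclude that $(\phi X)^*(\phi X) = X^*X$. Consequently $\big((\phi X)^*(\phi X)\big)^{-1/2} = (X^*X)^{-1/2}$; here one should note that the matrix square root (and its inverse) of a symmetric positive-definite matrix is well-defined and unique, which is exactly what makes the polar decomposition — and hence $\pr$ — well-defined on $\GL_{n,k}(\mathbb{K})$, as discussed above. Substituting into the formula gives
\[
\pr(\phi X) = (\phi X)\big((\phi X)^*(\phi X)\big)^{-1/2} = \phi X (X^*X)^{-1/2} = \phi \,\pr(X),
\]
which is the desired identity. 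One small point worth checking is that $\phi X \in \GL_{n,k}(\mathbb{K})$ whenever $X \in \GL_{n,k}(\mathbb{K})$, so that $\pr(\phi X)$ is defined: this holds because $\phi$ is invertible, hence left-multiplication by $\phi$ preserves the rank of $X$.

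There is no real obstacle here; the statement is essentially a one-line consequence of the closed form \eqref{eq:projection} together with the orthogonality relation $\phi^*\phi = I_n$. The only thing requiring a moment's care is the handling of the matrix square root — making sure the uniqueness of the symmetric positive-definite square root is invoked so that $(X^*X)^{-1/2}$ is genuinely a well-defined object that the computation can manipulate freely. I would keep the proof to two or three lines, as the excerpt itself signals that the lemma is "immediately deducible."
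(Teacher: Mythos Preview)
Your proof is correct and matches the paper's approach exactly: the paper states that the lemma is ``immediately deducible'' from the explicit formula \eqref{eq:projection}, and your direct computation using $\phi^*\phi = I_n$ is precisely that deduction. There is nothing to add.
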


Hence, the projection map commutes with left-Lie group action and thus respects the homogeneous structure of the Stiefel manifolds. This lemma allows us to show that the projection map \eqref{eq:projection} commutes with the expectation, given the covariance is isotropic.

\begin{prop}
\label{prop:projectionstiefelcommuteswithmean}
    Given $X\eqdist N(\mu, \sigma^2 I_{\mathrm{M}_{n,k}(\mathbb{K})})$ with $ \mu \in \GL_{n,k}(\mathbb{K})$. Then, 
    \[
    \pr(\mu)=\mathbb{E}[\pr(X)]:= \arginf_{p\in \St_{n,k}(\mathbb{K})} \int_{\St_{n,k}(\mathbb{K})}  \dist^2(p,y) p_{\pr(X)}(y)  d\vol(y).
    \]
\end{prop}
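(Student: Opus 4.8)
The plan is to exploit the symmetry of the isotropic Gaussian together with the equivariance of the projection map (Lemma \ref{lem:projectioncommuteswithaction}). First I would set $p_0 := \pr(\mu)$ and show that $p_0$ is a critical point of the Fréchet functional $F(p) := \int_{\St_{n,k}(\mathbb{K})} \dist^2(p,y)\, p_{\pr(X)}(y)\, d\vol(y)$. The key observation is that the pushforward distribution of $\pr(X)$ on $\St_{n,k}(\mathbb{K})$ is invariant under the stabilizer subgroup of $p_0$ inside $O(n)$ (resp. $U(n)$): if $\phi$ fixes $p_0$, then $\phi\mu$ has the same polar factor $p_0$, and since $X \eqdist N(\mu, \sigma^2 I)$ is isotropic, $\phi X \eqdist N(\phi\mu, \sigma^2 I)$; but the law of $\pr(X)$ depends on $\mu$ only through quantities that are unchanged when $\mu$ is replaced by $\phi\mu$ with $\phi$ in the stabilizer (one has to check this carefully — see the obstacle below). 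Consequently $\phi_*(\text{law of }\pr(X)) = \text{law of }\pr(X)$ whenever $\phi$ stabilizes $p_0$.

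Second, granting that invariance, I would argue that $p_0$ must be the global minimizer. The gradient of $F$ at a point $p$ is $-2\int \log_p(y)\, p_{\pr(X)}(y)\, d\vol(y)$ (the Riemannian barycentre first-order condition), a tangent vector at $p$. At $p = p_0$ this gradient is fixed by every $\phi$ in the stabilizer of $p_0$ acting on $T_{p_0}\St_{n,k}(\mathbb{K})$, because the measure is invariant and the isometries commute with $\log$. The stabilizer of a point in the Stiefel manifold acts on the tangent space with no nonzero fixed vectors (the isotropy representation of the homogeneous space $\St_{n,k} = O(n)/O(n-k)$, resp. $U(n)/U(n-k)$, has trivial fixed subspace), so the gradient vanishes, i.e. $p_0$ is a critical point. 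Then I would upgrade "critical point" to "arginf": since $\mu \in \GL_{n,k}(\mathbb{K})$ the pushed-forward Gaussian concentrates near $p_0$, and on a compact manifold with a measure sufficiently concentrated near a point the Fréchet functional has a unique minimizer there; alternatively, one invokes the standard uniqueness-of-barycentre results for measures supported in a geodesically convex ball and notes that, by the symmetry argument, any minimizer lies in the fixed-point set of the stabilizer, which is exactly $\{p_0\}$ (and its antipodal-type companions, which are excluded by the concentration/injectivity-radius bound).

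The main obstacle is the step asserting that the law of $\pr(X)$ is genuinely invariant under the stabilizer of $p_0$ — it is not enough that $\phi\mu$ has the same polar factor; one needs the whole distribution of $\pr(X)$, not just its "mean", to be unaffected, and $\phi\mu$ need not equal $\mu$. The clean way around this is to avoid describing the pushforward law explicitly: work upstairs in $\mathrm{M}_{n,k}(\mathbb{K})$, write $F(p) = \mathbb{E}\big[\dist^2(p, \pr(X))\big]$ directly as a Gaussian integral over $X$, and use $\phi X \eqdist X - $ no, rather use that for $\phi$ stabilizing $p_0$ one has, by isometry-invariance of $\dist$ and Lemma \ref{lem:projectioncommuteswithaction}, $\dist^2(p_0, \pr(\phi X)) = \dist^2(\phi^{-1}p_0, \pr(X)) = \dist^2(p_0, \pr(X))$, while $\phi X \eqdist N(\phi\mu, \sigma^2 I)$. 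So the obstruction reduces to showing $N(\phi\mu,\sigma^2 I)$ and $N(\mu,\sigma^2 I)$ induce the same distribution for $\dist^2(p_0, \pr(\cdot))$ — equivalently, averaging over the full stabilizer (a compact group, so we may integrate against its Haar measure) and using that $\int_{\mathrm{Stab}(p_0)} \phi\mu\, d\phi$ has the right structure. I would handle this by the Haar-averaging trick: replace the single isometry argument with an average over the compact stabilizer, which turns "invariance of the law" into a genuine identity and makes the first-order condition $\nabla F(p_0) = 0$ fall out cleanly; the remaining uniqueness then follows from concentration of the Gaussian near $p_0$ as above.
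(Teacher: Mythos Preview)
Your overall route---invariance of the law of $\pr(X)$ under isometries fixing $p_0=\pr(\mu)$, forcing $\nabla F(p_0)$ into the fixed subspace of the isotropy action, then a concentration argument for global minimality---is exactly the paper's route. The obstacle you flag, however, is not one: if $\phi\in O(n)$ satisfies $\phi p_0=p_0$ then $\phi$ acts as the identity on the column span of $p_0$, and since $\mu=p_0(\mu^*\mu)^{1/2}$ has the \emph{same} column span, $\phi\mu=\mu$ as well. Thus the left stabilizers of $p_0$ and of $\mu$ coincide, $\phi X\eqdist X$ holds outright, and the Haar-averaging detour is unnecessary. This is precisely the observation with which the paper opens its proof.

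The genuine gap lies elsewhere. Your claim that the isotropy representation of $\St_{n,k}=O(n)/O(n-k)$ has no nonzero fixed vector is false once $k\ge 2$: at $p_0=I_{n,k}$ the tangent space decomposes as $\mathfrak{o}(k)\oplus \mathrm{M}_{n-k,k}$, and $O(n-k)$, embedded as $\mathrm{diag}(I_k,O(n-k))$, acts by $(A,B)\mapsto(A,\phi_{22}B)$, leaving the entire skew-symmetric block $\mathfrak{o}(k)$ fixed. Left-stabilizer symmetry therefore only forces the $\mathrm{M}_{n-k,k}$ component of the gradient to vanish. To kill the $\mathfrak{o}(k)$ component you must bring in the right $O(k)$ action as well: one has $\pr(X\psi)=\pr(X)\psi$, right multiplication is an isometry for the canonical metric, and $X\psi\eqdist N(\mu\psi,\sigma^2 I)$, so the two-sided stabilizer of $\mu$ in $O(n)\times O(k)$ is available and \emph{does} act on $T_{p_0}\St_{n,k}$ with trivial fixed subspace. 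The paper's one-line justification (``$\log_\mu$ is anti-symmetric around $\mu$'') glosses over exactly this point, so a fully rigorous version requires more than what is written there as well.
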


The proof is found in Section \ref{sec:proofs}. The notion of projected normal random variables naturally extend from the definition of projected normal random variables on the spheres.

\begin{definition}
    Let $X\eqdist N(\mu,\sigma^2 I_{\mathrm{M}_{n,k}(\mathbb{K})})$ be normal random variable such that $\mathbb{P}(X\in \GL_{n,k}(\mathbb{K}))=1$. Then the random variable $\pr(X)$ is called a \textit{projected normal random variable}.
\end{definition}

Hence, Proposition \ref{prop:projectionstiefelcommuteswithmean} tells us that projected normal random variables, $\pr(X)$, have the same expectation as $X$, up to a factor of a symmetric $k$ by $k$ matrix, if the covariance of $X$ is isotropic.

\subsection{The filtering problem}

We shall model the filtering problem as measuring the SDE in \eqref{eq:linSDEXt} as follows:
\begin{equation}
    \begin{cases}
        \dd X_t = A X_t \dd t + \nu \dd B_t, & X_0 \eqdist N(\mu_0,\sigma_0^2 I_{\mathrm{M}_{n,k}(\mathbb{K})})\\
        Z_m= \pr\left( \pr(X_{t_m})+\varepsilon_m \right)
    \end{cases},
    \label{eq:filteringSDE}
\end{equation}
where $\pr$ is the projection onto $\St_{n,k}(\mathbb{K})$ defined in Equation \eqref{eq:projection}, $\varepsilon_m\in \mathrm{M}_{n,k}(\mathbb{K})$ are independent identically distributed normal random variables, following the distribution of $N(0,\xi^2 I_{\mathrm{M}_{n,k}(\mathbb{K})})$, and $0<t_1 <\dots <t_m$ is a discrete increasing sequence. Furthermore, we shall consider the measurement noise $\varepsilon_m$ to be independent of $X_t$ for all $t\geq 0$. The measurement process can be interpreted as follows; first consider the normal random variable with mean $\pr(X_{t_m})$ and with covariance matrix $\xi^2 I_{\mathrm{M}_{n,k}(\mathbb{K})}$. Then this normal random variable is projected onto $\St_{n,k}(\mathbb{K})$. It follows that the conditional probability distribution of $Z_m$ given $X_{t_m}$, denoted by $p_{z_m\mid X_{t_m}}$, follows the projected normal distribution.

\begin{remark}
Another plausible measurement process could be 
\[
\Tilde{Z}_m =\pr(X_{t_m}+ \varepsilon)
\]
as this would make $\Tilde{Z}_m$ itself a projected normal random variable, whilst $Z_m$ in Equation \eqref{eq:filteringSDE} does not follow the projected normal distribution since $Z_m$ may be computed by using the law of total probability
\[
p_{z_m} (z) = \int_{\St_{n,k}(\mathbb{K})} p_{Z_m\mid X_{t_m}=x}(z) p_{X_{t_m}}(x) \dvol_{\St_{n,k}(\mathbb{K})}(x),
\]
and this will never follow the projected normal distribution unless either $p_{Z_m\mid X_{t_m}=x}(z)$ or $ p_{X_{t_m}}(x)$ follow the point mass distribution. However, the measurement process in Equation \eqref{eq:filteringSDE} is the correct choice out of these two. The covariance of $\Tilde{Z}_m$ given $X_{t_m}$ depends on the realization of $X_{t_m}$. More precisely, it depends on $\norm{X_{t_m}}$. On the other hand the covariance of the measurement process given by $Z_m$ does not depend on the realization of $X_{t_m}$. 
\end{remark}

We propose the following concept of maximal scalar variance of a manifold that, as far as we know, is only previously used without label nor name in \cite{figueras2024parameter}. The maximal scalar variance of a manifold describes the maximally attainable scalar variance for a distribution on the underlying manifold. On $\mathbb{R}^n$, there is no such maximally attainable scalar variance as arbitrarily large scalar variances for a distribution on $\mathbb{R}^n$ are possible. On compact manifolds having suitable geometric properties however, there is typically a finite upper bound for the maximal attainable scalar variance of a distribution. This is useful in order to approximate the scalar variance on manifolds when a normal random variable is projected onto the manifold.

\begin{definition}
    Let $M$ be a compact Riemannian manifold with associated distance function $\dist$ and Riemannian volume form $\vol$. Then \textit{the maximal scalar variance on $M$} is the value 
    \[
     \mathcal{M}(M)= \frac{1}{\dim(M)\vol(M)} \sup_{p\in M}\int_{M} \dist^2(p,y)\dvol(y).
    \]
\end{definition}

\begin{remark}
    Consider a Lie group $G$. Note that if $M$ is a Riemannian G-symmetric space then the Riemannian distance function, $\dist$, is invariant under G-action, since by definition the action is isometric. Moreover, if $G\subseteq O(n)$ (resp. $U(n)$) then a change of coordinates done by action from any element from $G$ has Jacobian determinant $1$. Therefore, for any such space $M$, the integral
    \[
    \int_{M} \dist^2(p,y)\dvol_M(y)
    \]
    is the same irrespective of the choice of $p\in M$. Hence, for such spaces the maximal scalar variance is simplified to
    \[
        \mathcal{M}(M)=\frac{1}{\dim(M)\vol(M)} \int_{M} \dist^2(p,y)\dvol_M(y).
    \]
    for any $p$. Note that in particular that this applies to the spheres $\mathbb{S}^n$ as they are $O(n)$-symmetric spaces.
\end{remark}

We shall give some explicit computations of the maximal scalar variance on the the spheres as examples. First we shall look at the unit circle $\mathbb{S}^1 \cong \St_{2,1}(\mathbb{R})$, which is also the easiest case.

\begin{example}[Maximal scalar variance for $\mathbb{S}^1$]
    Parametrise $\mathbb{S}^1$ over $(0,2\pi)$ by $\theta \longmapsto (\cos(\theta),\sin(\theta))$ and take $p= \pi$ then $\dist(p,y)^2= \abs{\theta-\pi}^2$. Therefore it is immediate that
    \[
    \mathcal{M}(\mathbb{S}^1) = \frac{1}{2\pi} \int_0^{2\pi} \abs{\theta-\pi}^2 \dd \theta = \frac{\pi^2}{3}.
    \]
\end{example}

Next we shall consider the maximal scalar variance for $\mathbb{S}^n$ with $n\geq 2$ even.

\begin{example}[Maximal scalar variance for the even dimensional spheres]
\label{ex:maxvareven}
Let $n\geq 2$ be even. Using spherical coordinates for $\mathbb{S}^n$
\[
\begin{pmatrix}
    x_1\\x_2\\ x_3\\ \vdots \\x_{n-1} \\x_n \\x_{n+1}
\end{pmatrix} = \begin{pmatrix}
\sin(\theta_1)\sin(\theta_2)\dots \sin(\theta_{n-2}) \cos(\theta_{n-1})\sin(\phi)\\
\sin(\theta_1)\sin(\theta_2)\dots \sin(\theta_{n-2}) \sin(\theta_{n-1}) \sin(\phi)\\
\sin(\theta_1)\sin(\theta_2)\dots \cos(\theta_{n-2}) \sin(\phi)\\
\vdots\\
\sin(\theta_1)\cos(\theta_2)\sin(\phi)\\
\cos(\theta_1)\sin(\phi)\\
\cos(\phi)
\end{pmatrix},
\]
where $\theta_{n-1}\in [0,2\pi)$ and all other variables are in $[0,\pi]$. Moreover note that the volume form  in these coordinates is given by
\[
\dvol_{\mathbb{S}^n} = \sin^{n-1}(\phi) \sin^{n-2}(\theta_1) \sin^{n-3}(\theta_2) \dots \sin(\theta_{n-2}) \dd \phi \dd \theta_1 \dots \dd \theta_{n-1}.
\]
It is classical that we have the primitive function
\begin{multline*}
\psi(\phi):= \int \sin^{n-1}(\phi) \dd \phi = -\sum_{k=0}^{\frac{n-2}{2}} \sin^{n-2-2k}(\phi) \cos(\phi) \frac{(n-3-2k)!!}{(n-1)!!} \\
\cdot \frac{(n-2)!!}{(n-2-2k)!!},    
\end{multline*}
where $!!$ denotes the double factorial and by convention it holds that 
\[
1!!=0!!=(-1)!!=1.
\]
Choosing $p= (0,0,\dots,0,1)^T$, then $\dist(x,p)= \phi$ in the spherical coordinates. Using integration by parts it holds that
\begin{multline*}
\mathcal{M}(\mathbb{S}^n) = \frac{1}{n \vol(\mathbb{S}^n)} \int_{\mathbb{S}^n} \phi^2 \dvol_{\mathbb{S}^n}(\theta_1,\dots, \theta_{n-1}, \phi)\\
= \frac{1}{n\vol(\mathbb{S}^n)} \frac{(n-1)!!}{2(n-2)!!} \vol(\mathbb{S}^n) \int_0^\pi \phi^2 \sin^{n-1}(\phi) \dd \phi\\
= \frac{(n-1)!!}{2n(n-2!!} \left(\left[ \phi^2 \psi(\phi) \right]^{\pi}_{0} - \int_{0}^\pi 2\phi \psi(\phi) \dd \phi \right)\\
= \frac{1}{2n} \left(\pi^2 - 4\sum_{k=0}^{\frac{n-2}{2}} \frac{1}{(2k+1)^2} \right).
\end{multline*}
    In the case that $n=2$, the maximal scalar variance is    
    \[
    \mathcal{M}(\mathbb{S}^2)= \frac{\pi^2 -4}{4}.
    \]
\end{example}

\begin{example}[Maximal scalar variance for the odd dimensional spheres]
    Let $n\geq 3$ be odd. Consider the same choice of spherical coordinates as in Example \ref{ex:maxvareven} and with the same choice of $p$. We have the following primitive function
    \begin{multline*}
        \psi(\phi) = \int \sin^{n-1}(\phi) \dd \phi \\ = - \sum_{k=0}^{\frac{n-3}{2}}\sin^{n-2-2k}(\phi) \cos(\phi) \frac{(n-3-2k)!!}{(n-1)!!} \frac{(n-2)!!}{(n-2-2k)!!}\\ + \phi \frac{(n-2)!!}{(n-1)!!}.
    \end{multline*}

    Then using integration by parts
    \begin{align*}
        \mathcal{M}(\mathbb{S}^n) &= \frac{1}{n \vol(\mathbb{S}^n)} \int_{\mathbb{S}^n} \phi^2 \dvol_{\mathbb{S}^n}(\theta_1,\dots, \theta_{n-1}, \phi)\\
        &=\frac{1}{n \vol(\mathbb{S}^n)} \frac{(n-1)!! \vol(\mathbb{S}^n)}{\pi (n-2)!!} \int_0^\pi \phi^2 \sin^{n-1}(\phi) \dd \phi\\
        &=\frac{(n-1)!!}{n\pi (n-2)!!}\left(\left[\phi^2 \psi(\phi) \right]_0^\pi- \int_0^\pi 2 \phi \psi(\phi) \dd \phi  \right)\\
        &=\frac{1}{n} \left( \frac{\pi^2}{3} - 2\sum_{k=1}^{\frac{n-1}{2}} \frac{1}{(2k)^2} \right).
    \end{align*}
    Then for the special case that $n=3$ it follows that
    \[
    \mathcal{M}(\mathbb{S}^3)= \frac{1}{3}\left(\frac{\pi^2}{3}-\frac{1}{2} \right).
    \]
\end{example}

The intrinsic scalar variance for the projected normal random variable $\pr(X)$, where
\[
X\eqdist N(\mu,v^2I_{\mathrm{M}_{n,k}(\mathbb{K})})
\]
with $\mu \in \St_{n,k}(\mathbb{K})$, is given by
\begin{equation}
    \eta(v^2)= \frac{1}{\dim(\St_{n,k}(\mathbb{K}))} \int_{\St_{n,k}(\mathbb{K})} \dist^2(x,\mu) p_{\pr(X)}(x) \dvol_{\St_{n,k}(\mathbb{K})}(x).
    \label{eq:etaexact}
\end{equation}
Since $\eta$ is difficult to compute explicitly, we shall use the maximal scalar variance in a two-point Pad\'e approximation, akin to \cite{abouir2003multivariate}, to approximate the projected normal (intrinsic) scalar variance. In other words, for 
\[
X\eqdist N(\mu,v^2I_{\mathrm{M}_{n,k}(\mathbb{K})})
\]
with $\mu \in \St_{n,k}(\mathbb{K})$ it holds that
\begin{equation}
    \eta(v^2) \approx \hat{\eta}(v^2):= \frac{v^2 \mathcal{M}(\St_{n,k}(\mathbb{K}))}{\mathcal{M}(\St_{n,k}(\mathbb{K})) + v^2}.   
    \label{eq:padeapproxeta}
\end{equation}

The approximation in \eqref{eq:padeapproxeta} is based on Conjecture \ref{conj:smallvarianceforstiefel} below justifying that $\hat{\eta}(v^2)$ is a good approximation for the true intrinsic scalar variance. This is indeed the case for  $\mathbb{S}^2$, see \cite{figueras2024parameter}, where it was shown that mapping of $v^2$ to the intrinsic scalar variance of the projected Normal on the manifold $\mathbb{S}^2$ is actually a bijective mapping. In the case of a more general manifold, simulations show that the conjecture seems to be true and that the mapping between $v^2$ to the intrinsic scalar variance of a projected normal random variable seems to be bijective in this case as well. However,  our algorithm is based on using $\hat{\eta}(v^2)$, instead of moment matching the true variance. Since $\hat{\eta}$ is clearly injective, this allows us to obtain convergence even without proving bijectivity results as in \cite{figueras2024parameter} which would be a very challenging task in the general case $\St_{n,k}(\mathbb{K})$.

\begin{conjecture}
\label{conj:smallvarianceforstiefel}
 Let $X\eqdist N(\mu,v^2I_{\mathrm{M}_{n,k}(\mathbb{K})})$ with $\mu \in \St_{n,k}(\mathbb{K})$. Then,
 \[
 \frac{1}{\dim{\St_{n,k}(\mathbb{K})}}\int_{\St_{n,k}(\mathbb{K})} \dist^2(\mu,x) p_{\pr(X)}(x) \dvol_{\St_{n,k}(\mathbb{K})}(x) = v^2 + \mathcal{O}(v^3)
 \]
 for small $v$.
\end{conjecture}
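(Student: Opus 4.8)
\noindent\emph{Sketch of a possible proof of Conjecture~\ref{conj:smallvarianceforstiefel}.} The plan is to treat the left-hand side as a small-noise (Laplace-type) asymptotic. Write $X=\mu+vG$ with $G\sim N(0,I_{\mathrm{M}_{n,k}(\mathbb{K})})$ a standard Gaussian for the Frobenius inner product; since $\mathbb{P}(X\notin\GL_{n,k}(\mathbb{K}))=0$, the polar map $\pr$ of \eqref{eq:projection} is almost surely well-defined at $X$, and by the definition of $p_{\pr(X)}$ as a push-forward density the quantity to estimate is $\tfrac{1}{\dim\St_{n,k}(\mathbb{K})}\,\mathbb{E}\!\left[\dist^2(\mu,\pr(X))\right]$. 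Now $\pr$ is real-analytic on $\GL_{n,k}(\mathbb{K})$, an open neighbourhood of $\St_{n,k}(\mathbb{K})$, and restricts to the identity on $\St_{n,k}(\mathbb{K})$. Fix $\delta>0$ so small that the Frobenius $\delta$-ball around $\mu$ lies in $\GL_{n,k}(\mathbb{K})$, and split the expectation according to whether $\norm{vG}\le\delta$ or not. On the complementary event one uses only that $\dist^2(\mu,\pr(X))\le\operatorname{diam}_{g_c}\big(\St_{n,k}(\mathbb{K})\big)^2<\infty$ — here compactness of the Stiefel manifold is essential — and that $\mathbb{P}(\norm{vG}>\delta)$ decays faster than any power of $v$; hence this event contributes $o(v^N)$ for every $N$ and may be discarded.

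On the event $\{\norm{vG}\le\delta\}$ one Taylor-expands $\pr$ and $y\mapsto\dist^2(\mu,y)$ about $\mu$. Differentiating \eqref{eq:projection} gives $D_\mu\pr(H)=H-\tfrac12\mu\mu^*H-\tfrac12\mu H^*\mu$, and one checks directly that $D_\mu\pr$ is the Frobenius-orthogonal projection of $\mathrm{M}_{n,k}(\mathbb{K})$ onto $T_\mu\St_{n,k}(\mathbb{K})=\{V:\mu^*V+V^*\mu=0\}$: it fixes this subspace and kills the normal space $\{\mu S:S^*=S\}$. Since $\dist^2(\mu,\cdot)$ is smooth near $\mu$ with vanishing gradient there and Hessian $2g_c(\mu)$, composing the two expansions gives, on this event,
\[
\dist^2\!\big(\mu,\pr(\mu+vG)\big)=v^2\,g_c(\mu)\!\big(D_\mu\pr(G),\,D_\mu\pr(G)\big)+\mathcal{O}\!\big(v^3\norm{G}^3\big).
\]
Taking expectations and using that $G$ has finite moments of every order (so the remainder has expectation $\mathcal{O}(v^3)$; pushing the expansion one order further, the $v^3$-coefficient is an odd polynomial in the entries of $G$ and hence has mean zero, so the error is in fact $\mathcal{O}(v^4)$), and restoring the negligible tail, one arrives at
\[
\mathbb{E}\!\left[\dist^2(\mu,\pr(X))\right]=v^2\,\mathbb{E}\!\left[g_c(\mu)\!\big(D_\mu\pr(G),\,D_\mu\pr(G)\big)\right]+\mathcal{O}(v^3).
\]

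It remains to evaluate the leading coefficient, and this is the heart of the matter. Because $D_\mu\pr$ is an orthogonal projection, $D_\mu\pr(G)$ is a standard Gaussian on the subspace $T_\mu\St_{n,k}(\mathbb{K})$, so $\mathbb{E}\!\left[g_c(\mu)(D_\mu\pr(G),D_\mu\pr(G))\right]$ is the trace of the bilinear form $g_c(\mu)$ restricted to $T_\mu\St_{n,k}(\mathbb{K})$, taken with respect to the ambient Frobenius inner product; the conjecture amounts to saying that this Frobenius-trace equals $\dim\St_{n,k}(\mathbb{K})$, after which dividing by $\dim\St_{n,k}(\mathbb{K})$ gives $v^2+\mathcal{O}(v^3)$. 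For the spheres $\St_{n,1}(\mathbb{R})=\mathbb{S}^{n-1}$ this is immediate, since $I_n-\tfrac12\mu\mu^*$ acts as the identity on the tangent hyperplane, recovering the case already settled in the literature. For a general Stiefel manifold, however, $g_c$ reweights the ``rotational'' tangent directions $\mu A$ (with $A^*=-A$) by a factor $\tfrac12$ relative to Frobenius, and the main obstacle is to understand the effect of this reweighting — that is, to decide whether the Frobenius-trace of $g_c(\mu)$ over $T_\mu\St_{n,k}(\mathbb{K})$ still equals $\dim\St_{n,k}(\mathbb{K})$, and if not, by which constant the leading coefficient $v^2$ should be replaced. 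Once this single geometric identity is settled, the remaining ingredients — the localization, the two Taylor expansions, and the Gaussian-moment bookkeeping — are routine.
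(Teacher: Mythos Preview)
The paper does not supply a proof of this statement: it is explicitly labelled a conjecture, motivated by the spherical case and by simulations. So there is nothing in the paper to compare your sketch against; you are attempting to settle an open point.

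Your reduction is sound. Localising to a Frobenius ball, Taylor-expanding both $\pr$ and $\dist^2(\mu,\cdot)$, and reducing the leading term to $v^2$ times the Frobenius-trace of $g_c(\mu)$ on $T_\mu\St_{n,k}(\mathbb{K})$ is the right way to extract the small-$v$ asymptotic. Your formula $D_\mu\pr(H)=H-\tfrac12\mu\mu^*H-\tfrac12\mu H^*\mu$ is correct, as is the identification of $D_\mu\pr$ with the Frobenius-orthogonal projection onto the tangent space, and the parity argument upgrading the error to $\mathcal{O}(v^4)$.

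However, the obstacle you single out is decisive, and it resolves \emph{against} the conjecture. In the real case, decompose $T_\mu\St_{n,k}(\mathbb{R})=\{\mu A:A^T=-A\}\oplus\{\mu_\perp B\}$; the first summand has dimension $\tfrac{k(k-1)}{2}$, the second $(n-k)k$, and on a Frobenius-orthonormal basis $g_c(\mu)$ returns $\tfrac12$ on each vector of the first type and $1$ on each of the second. Hence the Frobenius-trace of $g_c(\mu)$ on the tangent space equals
\[
\tfrac12\cdot\tfrac{k(k-1)}{2}+(n-k)k=\tfrac{k(k-1)}{4}+(n-k)k,
\]
while $\dim\St_{n,k}(\mathbb{R})=\tfrac{k(k-1)}{2}+(n-k)k$. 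These coincide only for $k\le 1$. For $k\ge 2$ your own expansion therefore yields leading term $c\,v^2$ with
\[
c=1-\frac{k(k-1)}{4\dim\St_{n,k}(\mathbb{R})}<1,
\]
so the conjecture, read with the canonical metric as the paper stipulates, appears to fail beyond the spherical case. What your argument does establish rigorously is the corrected asymptotic $\eta(v^2)=c\,v^2+\mathcal{O}(v^4)$ with this explicit constant. (If one worked instead with the embedded Frobenius metric on $\St_{n,k}$, the $\tfrac12$ reweighting disappears and the statement would hold with coefficient~$1$.)
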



In \cite{figueras2025EKF} we introduced a numerical scheme, see Algorithm \ref{alg:kalmansphere} below, which approximates a solution to the filtering problem in \eqref{eq:filteringSDE}.

\begin{algorithm}[ht]
\caption{One step of the extended Kalman filter on $\St_{n,k}(\mathbb{K})$ after time $t$.}
\label{alg:kalmansphere}
\begin{algorithmic}[1]
    \State Given prior $x_0 \eqdist N(\mu_0,\sigma_0^2 I_{\mathrm{M}_{n,k}(\mathbb{K})})$ in $\mathrm{M}_{n,k} (\mathbb{K})$ with $\mu_0 \in \St_{n,k}(\mathbb{K})$ and a measurement $z_1\in \St_{n,k}(\mathbb{K}) $.
    \State $F_t = \exp_M(t A)$
    \State $\mu_{\text{pred.}} = F_t \cdot \mu_0$
    \State $P_{\text{pred.}} = \hat{\eta}(\sigma_0^2 + t\nu^2)= \frac{\sigma_0^2 + t\nu^2}{\mathcal{M}(\St_{n,k}(\mathbb{K}))+\sigma_0^2 + t\nu^2}\mathcal{M}(\St_{n,k}(\mathbb{K}))$
    \State $y= \log_{\mu_{\text{pred.}}}(z_1)$
    \State $K=  \frac{\sigma_0^2 + t\nu^2 }{\sigma_0^2 + t\nu^2 + \xi^2} $ 
    \State $\mu^{K} = \exp_{\mu_{\text{pred.}}}(K y)$
    \State $P^{K} = (1-K)P_{\text{pred.}}$
    \State The estimated Projected distribution is now $PrN(\mu^{K},P^{K} \id_{T_{\mu^{K}} \St_{n,k}(\mathbb{K})})$ with corresponding Normal distribution 
    \begin{multline*}        
    N\Big(\mu_{K} , \frac{P^{K} \mathcal{M}(\St_{n,k}(\mathbb{K}))}{\mathcal{M}(\St_{n,k}(\mathbb{K}))-P^{K}}I_{\mathrm{M}_{n,k} (\mathbb{K})} \Big)\\= N\Big(\mu_{K},\resizebox{8cm}{!}{$\frac{\xi^2(\sigma_0^2 + t\nu^2)\mathcal{M}(\St_{n,k}(\mathbb{K}))}{(\xi^2+ \sigma_0^2+t\nu^2)(\mathcal{M}(\St_{n,k}(\mathbb{K}))+\sigma_0^2+t\nu^2) - \xi^2(\sigma_0^2 + t\nu^2)}$} I_{\mathrm{M}_{n,k} (\mathbb{K})}\Big)
    \end{multline*}

\end{algorithmic}
\end{algorithm}

The algorithm works as follows: Firstly, consider the filtering problem defined by the system in \eqref{eq:filteringSDE}. Since the evolution of the particle is described by a linear SDE on $\mathbb{R}^n$, we can solve it exactly using the evolution function given in step 2. In step 3, the predicted average $F_t \cdot \mu_0$ is an element $\St_{n,k}$ by assumption on $\mu_0$ and since $A$ is anti-symmetric. This predicted point is then also an unbiased estimate of the true mean of $\pr(X_t)$ by Proposition \ref{prop:projectionstiefelcommuteswithmean}. The predicted scalar variance of the projected distribution is then approximated by Equation \eqref{eq:padeapproxeta} in the 4th step. In step 5 we compute the innovation process using the geometry of $\St_{n,k}(\mathbb{K})$. Next, in the 6th step we compute the Kalman gain $K$, which is a measurement on how much one should trust the prediction vs the measurement. In Step 7, the Kalman gain multiplied with the innovation process gives the direction and speed one should travel, along the geodesic defined by said speed and direction, starting at the predicted mean to end up at the filtered mean. Lastly we compute the Kalman variance estimate in step 8, which is then approximated as a projected normal distribution which is mapped back to a corresponding Normal distribution on $\mathrm{M}_{n,k}(\mathbb{K})$ through the inverse of the map in Equation \eqref{eq:padeapproxeta}. See \cite{figueras2025EKF} for a more detailed description how the filter works. 

Now we may state the main theorem which implies that if the system process eventually reaches a stable state, the Extended Kalman filter given in Algorithm \ref{alg:kalmansphere} converges and the filtered mean converges to the true state in probability.

\begin{theorem}
\label{thm:convofextkalman}
Let $X_t$ follow the SDE given in \eqref{eq:linSDEXt}, with $A=0$ and $\nu=0$, thus $X_t=X_0$ is the constant process  where $X_0\eqdist N(\mu_0,\sigma^2_0 I_{\mathrm{M}_{n,k}})$, $\mu_0\in \St_{n,k}(\mathbb{K})$. Furthermore, let $z_1,\dots,z_k$ be realizations of the measurements in \eqref{eq:filteringSDE}
and let $\mu^{K}_m$ be the filtered mean estimate $P^K_m$ is the Kalman variance estimate after $m$ iterations of Algorithm \ref{alg:kalmansphere} applied to the measurements $z_1,\dots,z_m$. Then,
\[
    \dist(\mu^K_m,X_0)\rightarrow 0
\]
in probability as $m\to \infty$. Moreover,
\[
P^{K}_m \to 0
\]
as $m\to \infty$. 
\end{theorem}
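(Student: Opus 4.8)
The plan is to decouple Algorithm~\ref{alg:kalmansphere} into its deterministic part --- the scalar variance --- and its stochastic part --- the mean. With $A=0$ and $\nu=0$ every prediction step is trivial, $F_t=\Id$, so $\mu_{\mathrm{pred}}=\mu^K_{m-1}$ and $P_{\mathrm{pred}}=\hat\eta(v^2_{m-1})$, where $v^2_{m-1}$ is the ambient Normal variance produced in step~9 at the previous iteration, with $v^2_0=\sigma_0^2$. Unwinding steps~6--9 yields
\[
K_m=\frac{v^2_{m-1}}{v^2_{m-1}+\xi^2},\qquad
v^2_m=\hat\eta^{-1}\!\big((1-K_m)\hat\eta(v^2_{m-1})\big)=\frac{v^2_{m-1}\,\xi^2\mathcal{M}}{(v^2_{m-1})^2+v^2_{m-1}\mathcal{M}+\xi^2\mathcal{M}},
\]
with $\mathcal{M}=\mathcal{M}(\St_{n,k}(\mathbb{K}))>0$, while the mean evolves by the geodesic interpolation $\mu^K_m=\exp_{\mu^K_{m-1}}\!\big(K_m\log_{\mu^K_{m-1}}(z_m)\big)$. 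The variance recursion does not see the data, so it is purely deterministic; the mean recursion will be handled conditionally on the realisation of $X_0$, which makes $z_1,z_2,\dots$ genuinely i.i.d.

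For the variance, the displayed formula gives $v^2_m/v^2_{m-1}=\xi^2\mathcal{M}/\big(\xi^2\mathcal{M}+v^2_{m-1}(v^2_{m-1}+\mathcal{M})\big)<1$, so $(v^2_m)$ is strictly decreasing and bounded below by $0$; passing to the limit $v^2_\infty$ in the recursion forces $v^2_\infty(v^2_\infty+\mathcal{M})=0$, hence $v^2_\infty=0$. Therefore $P^K_m=\hat\eta(v^2_m)\to\hat\eta(0)=0$, which is the second assertion. Moreover $\tfrac{1}{v^2_m}-\tfrac{1}{v^2_{m-1}}=\tfrac{1}{\xi^2}+\tfrac{v^2_{m-1}}{\xi^2\mathcal{M}}$ lies between $\tfrac{1}{\xi^2}$ and $\tfrac{1}{\xi^2}+\tfrac{\sigma_0^2}{\xi^2\mathcal{M}}$, so $v^2_m=\Theta(1/m)$ and $K_m=\Theta(1/m)$; in particular $K_m\to0$, $\sum_m K_m=\infty$ and $\sum_m K_m^2<\infty$, which are exactly the Robbins--Monro step-size conditions.

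For the mean, fix $X_0$ and set $\zeta:=\pr(X_0)\in\St_{n,k}(\mathbb{K})$ --- the projected constant state to which convergence is claimed. Since $\pr$ restricts to the identity on $\St_{n,k}(\mathbb{K})$ and $\pr(X_0)+\varepsilon_m$ has full rank almost surely, Proposition~\ref{prop:projectionstiefelcommuteswithmean}, applied with mean $\zeta$ and variance $\xi^2$, identifies $\zeta$ as the Karcher mean of each $z_m$, i.e.\ the minimiser of $\Phi(p):=\mathbb{E}[\dist^2(p,z_m)\mid X_0]$; hence $\mathbb{E}[\log_\zeta(z_m)\mid X_0]=0$ and $\mathbb{E}[\log_p(z_m)\mid X_0]=-\tfrac{1}{2}\Grad\Phi(p)$ for $p$ off the (measure-zero) cut locus. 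The mean update is thus Riemannian stochastic gradient descent for $\Phi$ with the step sizes $K_m$ above, driven by the martingale-difference noise $\log_{\mu^K_{m-1}}(z_m)-\mathbb{E}[\log_{\mu^K_{m-1}}(z_m)\mid\mathcal F_{m-1}]$, whose conditional second moment is at most $\operatorname{diam}(\St_{n,k}(\mathbb{K}))^2<\infty$ by compactness. The plan is then the standard Robbins--Siegmund supermartingale argument: using a second-order comparison inequality on $\St_{n,k}(\mathbb{K})$ with its canonical metric --- which, being compact and homogeneous, has bounded sectional curvature, hence a positive convexity radius and a quantitative law of cosines with curvature correction --- one obtains, once the iterate lies in a fixed geodesic ball around $\zeta$,
\[
\mathbb{E}[\dist^2(\mu^K_m,\zeta)\mid\mathcal F_{m-1}]\le(1-cK_m)\dist^2(\mu^K_{m-1},\zeta)+C\,K_m^2,
\]
with $c>0$ supplied by the non-degeneracy (positive-definite Hessian) of the minimum of $\Phi$ at $\zeta$. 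Combined with $\sum K_m=\infty$, $\sum K_m^2<\infty$ this forces $\mathbb{E}[\dist^2(\mu^K_m,\zeta)\mid X_0]\to0$, and Markov's inequality gives $\dist(\mu^K_m,\zeta)\to0$ in probability, first conditionally on $X_0$ and hence unconditionally.

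The main obstacle is upgrading this last step from local to global. Because $\varepsilon_m$ is unbounded the observations $z_m$ spread over all of $\St_{n,k}(\mathbb{K})$, so one cannot assume a priori that the iterates remain in a geodesically convex neighbourhood of $\zeta$; and the drift bound degenerates near the ``far'' critical set of $\Phi$ (the analogue of the antipode of $\zeta$ on a sphere), where the contraction constant is lost. One therefore needs a separate argument that the iterates enter, and ultimately stay in, the basin of $\zeta$ --- exploiting that $\zeta$ is the unique minimiser of $\Phi$ while the remaining critical points are unstable and are escaped because the measurement noise excites every tangent direction --- together with a careful treatment of the smoothness of $\Phi$ (differentiation under the integral past cut loci). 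An alternative that avoids reproving these geometric facts is to invoke an existing convergence theorem for recursive/streaming Karcher-mean estimators on compact Riemannian manifolds, whose hypotheses are met precisely by the step-size estimates from the variance analysis.
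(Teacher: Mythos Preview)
Your route is genuinely different from the paper's. Both handle the variance deterministically; the paper uses the one-line inequality $\hat\eta^{-1}\big((1-K)\hat\eta(\sigma_0^2)\big)\le(1-K)\sigma_0^2$ to dominate the ambient variances by those of the classical linear Kalman filter on $\mathbb{R}$, while you go further and extract the rate $v_m^2=\Theta(1/m)$, hence $K_m=\Theta(1/m)$. For the mean the paper does \emph{not} use stochastic approximation: it linearises the geodesic barycentre identity $K_m\log_{\mu^K_m}(z_m)+(1-K_m)\log_{\mu^K_m}(\mu^K_{m-1})=0$ around the target $p$, obtaining to first order $\log_p(\mu^K_m)\approx K_m\log_p(z_m)+(1-K_m)\log_p(\mu^K_{m-1})$, iterates this into a weighted sum $\sum_j w_{m,j}\log_p(z_j)$ plus a tail $\big(\prod_i(1-K_i)\big)\log_p(\mu_0)$, and disposes of the two pieces via Lindeberg's condition and $\prod_i(1-K_i)\to0$ respectively. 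The paper's argument is short and exploits the convex-combination structure directly, but its control of the linearisation remainder is only ``up to first order,'' so the local-versus-global issue you explicitly flag is present there too, just not confronted. Your Robbins--Monro framing is more systematic, gives rates, and connects to the streaming-Fr\'echet-mean literature whose theorems could in principle close the gap you acknowledge; the price is that verifying their hypotheses on $\St_{n,k}$ (unique minimiser, instability of the far critical set, curvature bounds) is work the paper never attempts.
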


Note that if Conjecture \ref{conj:smallvarianceforstiefel} holds true, Theorem \ref{thm:convofextkalman} tells us that $P^K_m$ will be a better approximation for large $m$ given that $\xi^2$ is small enough, and the true scalar variance of $\mathbb{E}[X_t\mid Z_{t_m}]$ also goes to zero.

\section{Proofs}
\label{sec:proofs}

\subsection{Proof of Proposition \ref{prop:projectionstiefelcommuteswithmean}}

Take any $\phi \in O(n)$ (resp. $U(n)$) such that 
\[
\phi\mu=\mu\in \GL_{n,k}(\mathbb{R}) \left(\text{resp. } \GL_{n,k}(\mathbb{C})\right)
\]
then for $X\eqdist N(\mu,\sigma^2 I_{\mathrm{M}_{n,k}(\mathbb{R})})$ (resp. $X\eqdist N(\mu,\sigma^2 I_{\mathrm{M}_{n,k}(\mathbb{C})})$) it holds that 
\[
\phi X\eqdist X.
\]
Therefore the probability density function for $\pr(X)$ is symmetric around $\mu$ since
\[
\phi\pr(X) = \pr(\phi  X) \eqdist \pr(X)
\]
by Lemma \ref{lem:projectioncommuteswithaction}. Note that
\[
    \dist^2(p,y) = \norm{\log_p(y)}^2
\]
and therefore as long as $p$ is sufficiently close to $y$ it holds that
\[
\frac{\dd }{\dd p} \dist^2(p,y) = 2\log_p(y).
\]
Therefore, 
\begin{multline*}
\frac{\dd}{\dd \mu} \int_{\St_{n,k}} \dist^2(\mu,y) p_{\pr(X)}(y) \vol(y) \\
=\int_{\St_{n,k}} \frac{\dd}{\dd \mu}  \dist^2(\mu,y) p_{\pr(X)}(y) \vol(y)\\
=\int_{\St_{n,k}}   2\log_\mu(y) p_{\pr(X)}(y) \vol(y) =0
\end{multline*}
by symmetry since $\log_\mu$ is anti-symmetric around $\mu$. That this is indeed a global minimum follows from observing that
\[
p_{\pr(X)} (\mu) > p_{\pr(X)}(-\mu)
\]
and that for any geodesic ball $B_r(\mu) \subseteq \St_{n,k}$ of radius $r$ it holds that
\[
\mathbb{P}(\pr(X)\in B_r(\mu)) \geq \mathbb{P}(\pr(X) \in B_r(-\mu)).
\]

\qed

\subsection{Proof of Theorem \ref{thm:convofextkalman}}
    In order to finish the proof we shall need two additional lemmas. But before that, we shall make a few observations. Let $\hat{\eta}:[0,\infty)\rightarrow \mathbb{R}$ denote the map defined in Equation \eqref{eq:padeapproxeta}.
    Therefore we may write that $P^K_1= (1-K_1)\hat{\eta}(\sigma_0^2)$, since $\nu= 0$. However it holds that
    \[
    \hat{\eta}^{-1}(r)= \frac{r \mathcal{M}(\St_{n,k}(\mathbb{K}))}{\mathcal{M}(\St_{n,k}(\mathbb{K}))- r}
    \]
    and thus
    \[
    \hat{\eta}^{-1}((1-K)\hat{\eta}(\sigma_0^2)) = \frac{(1-K)\hat{\eta}(\sigma_0^2) \mathcal{M}(\St_{n,k}(\mathbb{K}))}{\mathcal{M}(\St_{n,k}(\mathbb{K}))-(1-K)\hat{\eta}(\sigma_0^2)}.
    \]
    Since $(1-K)\leq 1$ it holds that
    \[
            \frac{(1-K)\hat{\eta}(\sigma_0^2) \mathcal{M}(\St_{n,k}(\mathbb{K}))}{\mathcal{M}(\St_{n,k}(\mathbb{K}))-(1-K)\hat{\eta}(\sigma_0^2)} \leq (1-K)\frac{\hat{\eta}(\sigma_0^2) \mathcal{M}(\St_{n,k}(\mathbb{K}))}{\mathcal{M}(\St_{n,k}(\mathbb{K}))-\hat{\eta}(\sigma_0^2)}
    \]
    and one can conclude that
    \[
    \hat{\eta}^{-1}((1-K)\hat{\eta}(\sigma^2_0))\leq (1-K) \sigma_0^{2}.
    \]
    Observe that the right hand side of the above inequality is exactly the variance of the filtered process after one step in the linear Kalman filter on $\mathbb{R}$. Therefore, $\hat{\eta}^{-1}(P^K_i)$ is dominated by the classical linear Kalman filter variance for observing the constant process on $\mathbb{R}$ with linear measurements and variance $\xi^2$. Since the classical variance estimate for the classical linear Kalman filter goes to $0$, it holds that
    \begin{equation}
    \label{eq:limitvarianceestimate}
        \lim_{i\to \infty}P^K_i  = 0.
    \end{equation}

    \begin{remark}
    Note that this makes the Kalman scalar variance estimator $P^K_i$ an optimistic estimator, the opposite of conservative.   
    \end{remark}

\begin{lemma}
\label{lem:Kalmangains}
    Let $K_m$ denote the $m$:th step Kalman gain following Algorithm \ref{alg:kalmansphere} then
    \[
    \lim_{N\to \infty} \prod_{m=1}^N (1-K_m) =0
    \]
    and
    \[
    \lim_{N\to \infty} K_N =0.
    \]
    The interpretation of this is that given $N$ measurements, the weight of the $N+1$:th measurement will be small when averaging $\mu^K_N$ with $z_{N+1}$.
\end{lemma}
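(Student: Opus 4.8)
The plan is to deduce both limits from the fact, already established in \eqref{eq:limitvarianceestimate}, that the Kalman variance estimates satisfy $P^K_i\to 0$ as $i\to\infty$. First I would track how the scalar variances evolve when Algorithm \ref{alg:kalmansphere} is iterated on $z_1,\dots,z_m$. Write $\sigma_m^2$ for the variance of the Normal distribution produced at the $m$-th step, so that $\sigma_0^2$ is the variance of the prior; the last step of the algorithm gives $\sigma_m^2=\hat\eta^{-1}(P^K_m)$, which is then used as the prior variance at step $m+1$. Since $\nu=0$, the predicted scalar variance entering step $m+1$ is
\[
P_{\mathrm{pred},\,m+1}=\hat\eta\big(\sigma_m^2+t\nu^2\big)=\hat\eta\big(\hat\eta^{-1}(P^K_m)\big)=P^K_m ,
\]
because $P^K_m$ lies in $[0,\mathcal M(\St_{n,k}(\mathbb K)))$, the range of $\hat\eta$, on which $\hat\eta\circ\hat\eta^{-1}=\mathrm{id}$. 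Hence step 8 of the algorithm reads $P^K_{m+1}=(1-K_{m+1})P^K_m$ for every $m\ge1$, with all the $P^K_m$ strictly positive since $\xi^2>0$ forces $1-K_m\in(0,1]$ and $\hat\eta,\hat\eta^{-1}$ are positive away from $0$.

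Given this recursion, the first assertion is a telescoping computation:
\[
\prod_{m=2}^{N}(1-K_m)=\prod_{m=2}^{N}\frac{P^K_m}{P^K_{m-1}}=\frac{P^K_N}{P^K_1},
\]
which tends to $0$ as $N\to\infty$ by \eqref{eq:limitvarianceestimate}; multiplying by the fixed factor $1-K_1\in(0,1]$ then gives $\prod_{m=1}^{N}(1-K_m)\to0$. For the second assertion, with $\nu=0$ the Kalman gain at step $m$ is $K_m=\sigma_{m-1}^2/(\sigma_{m-1}^2+\xi^2)$ with $\sigma_{m-1}^2=\hat\eta^{-1}(P^K_{m-1})$; since $\hat\eta^{-1}$ is continuous at $0$ with $\hat\eta^{-1}(0)=0$ and $P^K_{m-1}\to0$, we get $\sigma_{m-1}^2\to0$ and therefore $K_m\to0$.

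The one genuinely substantive point is the identity $P_{\mathrm{pred},\,m+1}=P^K_m$: under $\nu=0$ the map $\hat\eta$ in the prediction step exactly undoes the map $\hat\eta^{-1}$ used at the previous step to convert the filtered projected-normal variance back to a Normal variance, and this is what makes the product telescope; once it is noticed, the lemma is immediate from \eqref{eq:limitvarianceestimate}. If one preferred an argument not invoking \eqref{eq:limitvarianceestimate} as a black box, I would instead substitute the explicit formulas for $\hat\eta$, $\hat\eta^{-1}$ and $K_m$ to obtain the closed recursion $1/\sigma_m^2=1/\sigma_{m-1}^2+1/\xi^2+\sigma_{m-1}^2/(\xi^2\mathcal M(\St_{n,k}(\mathbb K)))$; this forces $1/\sigma_m^2\ge1/\sigma_0^2+m/\xi^2\to\infty$, hence $\sigma_m^2\downarrow0$ and $K_m\to0$, while boundedness of the increments $1/\sigma_m^2-1/\sigma_{m-1}^2$ yields $\sigma_m^2\gtrsim1/m$, so that $\sum_mK_m\gtrsim\sum_m\sigma_{m-1}^2=\infty$ and $\prod_{m=1}^{N}(1-K_m)\le e^{-\sum_{m=1}^{N}K_m}\to0$.
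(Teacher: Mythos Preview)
Your proof is correct and follows essentially the same approach as the paper's: both deduce the two limits directly from \eqref{eq:limitvarianceestimate} via the identity $P^K_N=\prod_{i=1}^N(1-K_i)\,\hat\eta(\sigma_0^2)$ and the formula $K_m=\hat\eta^{-1}(P^K_{m-1})/(\xi^2+\hat\eta^{-1}(P^K_{m-1}))$. You are simply more explicit than the paper about why the recursion $P^K_{m+1}=(1-K_{m+1})P^K_m$ holds (namely, because with $\nu=0$ the prediction step applies $\hat\eta$ to $\hat\eta^{-1}(P^K_m)$), and your alternative self-contained argument via the closed recursion for $1/\sigma_m^2$ is a nice bonus not present in the paper.
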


\begin{proof}
    Since $P^K_i$ goes to zero as $i\to \infty$ by Equation \eqref{eq:limitvarianceestimate} and since
    \[
    P^K_N = \prod_{i=1}^N (1-K_i) \hat{\eta}(\sigma^2_0)
    \]
    it must follow that
    \[
    \prod_{i=1}^\infty (1-K_i) =0.
    \]
    Moreover, since 
    \[
    K_{i} = \frac{\hat{\eta}^{-1}(P^K_{i-1})}{\xi^2 +\hat{\eta}^{-1}(P^K_{i-1})}
    \]
    and since $\lim_{i\to \infty} P^K_{i}$ and $\xi^2> 0$ it follows that $\lim_{i\to\infty }K_i=0$.
\end{proof}

We shall have need of one more lemma before we can finish the proof as we have need for a way of expanding $\dist(p,\mu_m^K)$.

\begin{lemma}
\label{lem:linearization}
    Consider $\mu_K^m$, the filtering mean by repeating $m$ steps of the Extended Kalman filter given in Algorithm \ref{alg:kalmansphere} and let $p= X_0(\omega)$ be the realization of the constant process $X_t=X_0$. Then 
    \begin{multline}
        \log_p(\mu^K_m) \approx K_m\log_p(z_m)+ \sum_{j=1}^{m-1} \left(\prod_{i=1}^j (1-K_{m-i}) \right) K_{m-j} \log_p(z_{m-j})\\
        + \left( \prod_{i=1}^m (1-K_{i}) \right) \log_p(\mu_0)
        \label{eq:linearizationlogfilterstep}
    \end{multline}
    up to first order.
\end{lemma}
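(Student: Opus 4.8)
The plan is to prove Lemma \ref{lem:linearization} by induction on $m$, unwinding one step of Algorithm \ref{alg:kalmansphere} at a time and linearizing the exponential/logarithm maps around the fixed base point $p = X_0(\omega)$. The single-step update is $\mu^K_m = \exp_{\mu_{\mathrm{pred}}}(K_m y)$ with $y = \log_{\mu_{\mathrm{pred}}}(z_m)$ and, since $A = \nu = 0$, the prediction is trivial: $\mu_{\mathrm{pred}} = \mu^K_{m-1}$. So the recursion reads $\mu^K_m = \exp_{\mu^K_{m-1}}\!\bigl(K_m \log_{\mu^K_{m-1}}(z_m)\bigr)$, i.e. a geodesic convex combination that moves a fraction $K_m$ of the way from $\mu^K_{m-1}$ towards $z_m$.

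First I would record the ``pole-change'' linearization identity: for points $q, r$ both close to $p$,
\[
\log_p\bigl(\exp_q(t\log_q(r))\bigr) \approx (1-t)\log_p(q) + t\log_p(r),
\]
which holds up to first order in the geodesic distances involved (it is exact in the flat case and the curvature correction is higher order; one can see this by differentiating the geodesic $s\mapsto \exp_q(s\log_q(r))$ at $s=0$ and noting $\log_p$ has derivative the identity at $p$). Applying this with $q = \mu^K_{m-1}$, $r = z_m$, $t = K_m$ gives
\[
\log_p(\mu^K_m) \approx (1-K_m)\log_p(\mu^K_{m-1}) + K_m \log_p(z_m).
\]
Then I would simply iterate this linear recursion. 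Writing $v_j = \log_p(\mu^K_j)$ and $v_{-0} := \log_p(\mu_0)$, the recursion $v_m = (1-K_m)v_{m-1} + K_m\log_p(z_m)$ unrolls to exactly the claimed telescoping sum: the term $K_m\log_p(z_m)$ survives with no prefactor, each earlier measurement $z_{m-j}$ picks up the product $\prod_{i=1}^j(1-K_{m-i})$ of the intervening ``retention'' factors times its own gain $K_{m-j}$, and the initial mean $\mu_0$ is hit by the full product $\prod_{i=1}^m(1-K_i)$. This is a routine discrete Duhamel/variation-of-constants expansion, best presented as a one-line induction with the base case $m=1$ giving $\log_p(\mu^K_1)\approx K_1\log_p(z_1) + (1-K_1)\log_p(\mu_0)$ directly from the single-step identity.

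The main obstacle is making precise in what sense the statement holds ``up to first order'' — the lemma is stated with $\approx$, so strictly one only needs the formal linearization, but one should be careful that (i) all the points $\mu^K_j$, $z_j$, $\mu_0$ remain within a common normal neighborhood of $p$ where the relevant logarithms are defined and smooth (which is reasonable when $\xi^2$ is not too large and which is anyway the regime in which the EKF linearization is meaningful), and (ii) the curvature corrections accumulated over $m$ steps do not spoil the first-order claim — here the saving grace is that by Lemma \ref{lem:Kalmangains} the gains $K_m \to 0$ and $\prod(1-K_m)\to 0$, so the step sizes $K_m\log_{\mu^K_{m-1}}(z_m)$ shrink and the tail of the sum is well controlled. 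I would state the lemma's $\approx$ as shorthand for ``equal modulo terms that are higher order in the geodesic displacements,'' note that the curvature terms at each step are quadratic in those displacements, and leave the detailed error bookkeeping implicit since the subsequent use of this expansion (in proving $\dist(\mu^K_m, X_0)\to 0$) only needs the leading linear behaviour together with the convergence already established in Lemma \ref{lem:Kalmangains}.
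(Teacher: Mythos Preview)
Your proposal is correct and follows essentially the same route as the paper: both derive the one-step recursion $\log_p(\mu^K_m)\approx K_m\log_p(z_m)+(1-K_m)\log_p(\mu^K_{m-1})$ and then unroll it by induction. The only cosmetic difference is that the paper obtains this recursion by first noting the weighted-barycenter identity $K_m\log_{\mu^K_m}(z_m)+(1-K_m)\log_{\mu^K_m}(\mu^K_{m-1})=0$ and then invoking a linearization result from \cite{hotz2024central}, whereas you write the equivalent pole-change identity for geodesic interpolation directly.
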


\begin{proof}
    Note that
    \[
    K_m\log_{\mu^K_m}(z_m ) + (1-K_m)\log_{\mu^K_m}(\mu^K_{m-1}) =0 
    \]
    and then through linearization it holds that
    \[
    \log_p(\mu^K_m) = K_m \log_p (z_m) + (1-K_m)\log_{p}(\mu^K_{m-1}) + \mathcal{O}(\dist(\mu^K_m,p)^2),
    \]
    by \cite[Theorem 1]{hotz2024central}. Now, Equation \eqref{eq:linearizationlogfilterstep} follows inductively.
\end{proof}

\begin{proof}[Proof of Theorem \ref{thm:convofextkalman}]
Note that
\[
\dist(p,\mu_m^K)= \norm{\log_p(\mu_m^K)}
\]
and then by Lemma \ref{lem:linearization} it holds that
\begin{multline*}
\dist(p,\mu_m^K)\\ \leq \norm{K_m\log_p(z_m)+ \sum_{j=1}^{m-1} \left(\prod_{i=1}^j (1-K_{m-i}) \right) K_{m-j} \log_p(z_{m-j})}\\ + \left( \prod_{i=1}^m (1-K_{i}) \right) \norm{\log_p(\mu_0)}.
\end{multline*}
Firstly, note that the first term satisfy Lindeberg's conditions by \ref{lem:Kalmangains} and thus converge to zero in probability by Lindeberg's central limit theorem \cite{lindeberg1922neue}. Secondly, the second term goes to zero as $m \to \infty$ by Lemma \ref{lem:Kalmangains}.    
\end{proof}

\section{Simulations}
\label{sec:simulations}
We shall demonstrate some experimental verification of Theorem \ref{thm:convofextkalman}. All simulations have been performed using MATLAB. They have all been set up by drawing a point $x_0$ from $ N(\mu_0,\sigma_0^2 I_{\mathrm{M}_{n,k}(\mathbb{K})})$ and then drawing $N$ independent observations from $\pr(Z)$, where 
\[
Z \eqdist N (x_0, \xi^2 I_{\mathrm{M}_{n,k}(\mathbb{K})}),
\]
and then computing $\pr(Z)= Z(Z^*Z)^{-1/2}$. We fix the parameters $A=0\in \mathrm{M}_{n,n}(\mathbb{K})$ and $\nu=0$ in \eqref{eq:filteringSDE} and we fix initial mean $\mu_0= I_{n,k}$. Then the filtering algorithm, see Algorithm \ref{alg:kalmansphere}, is implemented to gain estimates of the mean and scalar variance. The estimated scalar variance $P^K_m$ and the distance squared from $\mu^K_m$ to $x_0$ normalized with the dimension are plotted for different values of measurement errors, different starting variances and for different Stiefel manifolds. Simulations are done for the following selection of Stiefel manifolds; $\St_{4,2}(\mathbb{R})$ in Figure \ref{fig:St42}, $\St_{6,3}(\mathbb{R})$ in Figure \ref{fig:St63}, $\St_{12,3}(\mathbb{R})$ in Figure \ref{fig:St123} and $\St_{15,5}(\mathbb{R})$ in Figure \ref{fig:St155}. One can see that for $\xi^2=0.1$, the algorithm works reasonably well as the distance squared of the filtered mean estimate is close to the estimated Kalman scalar variance. However, already for $\xi^2=0.5$ one can see that there will be some significant additional error propagation due to the linearization of the fundamentally non-linear filtering problem defined in \eqref{eq:filteringSDE}. Moreover, one can also see that this error propagation gets exaggerated when the Stiefel manifolds are of higher dimension. Nonetheless, the filtering algorithm seem to converge, though at a slower speed, even for larger measurement errors.

\begin{figure}[H]
    \centering
    \begin{subfigure}{0.85\textwidth}
        \includegraphics[width=\linewidth]{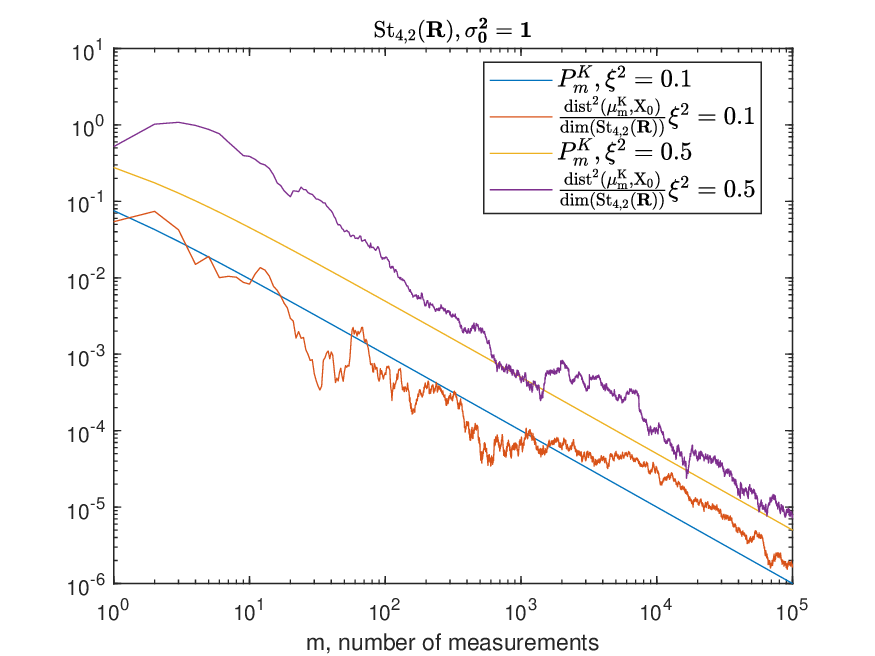}
        \caption{$\sigma_0^2=1$.}
    \end{subfigure}
\end{figure}
\begin{figure}[H]
\ContinuedFloat
    \begin{subfigure}{0.85\textwidth}
        \includegraphics[width=\linewidth]{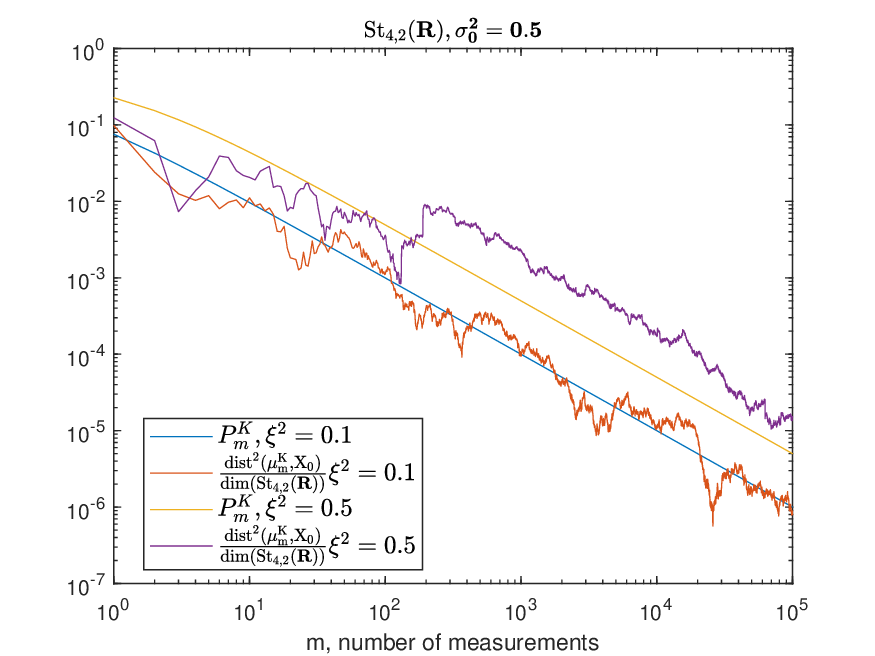}
        \caption{$\sigma_0^2=0.5$.}
    \end{subfigure}
\end{figure}
\begin{figure}[H]
\ContinuedFloat
    \begin{subfigure}{0.85\textwidth}
        \includegraphics[width=\linewidth]{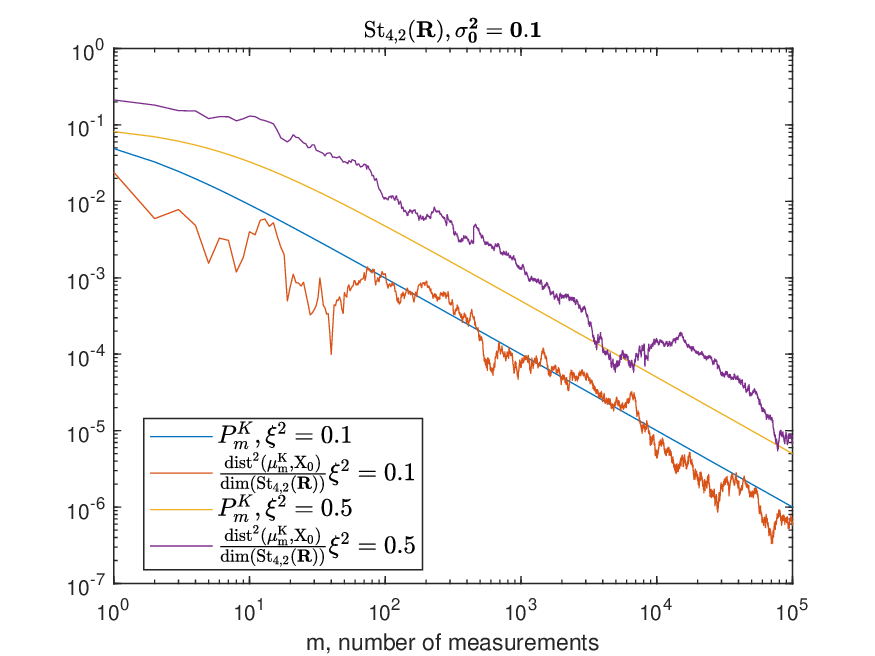}
        \caption{$\sigma_0^2=0.1$.}
    \end{subfigure}
    \caption{ Simulations of the extended Kalman filter on $\St_{4,2}(\mathbb{R})$ given in Algorithm \ref{alg:kalmansphere}. The intrinsic scalar variance and the normalized experimental distance squared from the filtered mean to the true point over $m$ measurements. In each figure one can see both given $\xi^2=0.1$ and $\xi^2=0.5$.}
    \label{fig:St42}
\end{figure}

\begin{figure}[H]
    \centering
    \begin{subfigure}{0.85\textwidth}
        \includegraphics[width=\linewidth]{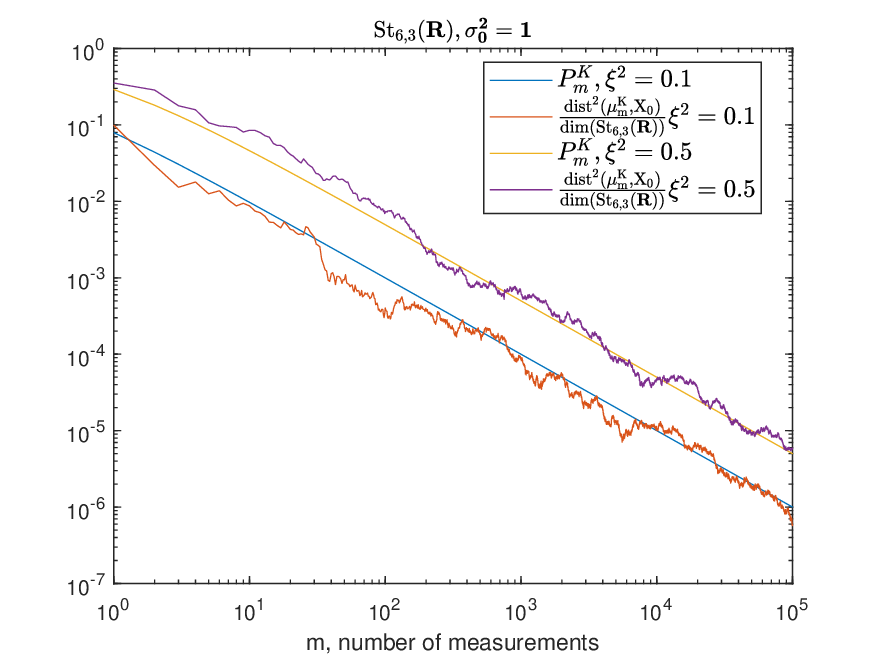}
        \caption{$\sigma_0^2=1$.}
    \end{subfigure}
\end{figure}
\begin{figure}[H]
\ContinuedFloat
    \begin{subfigure}{0.85\textwidth}
        \includegraphics[width=\linewidth]{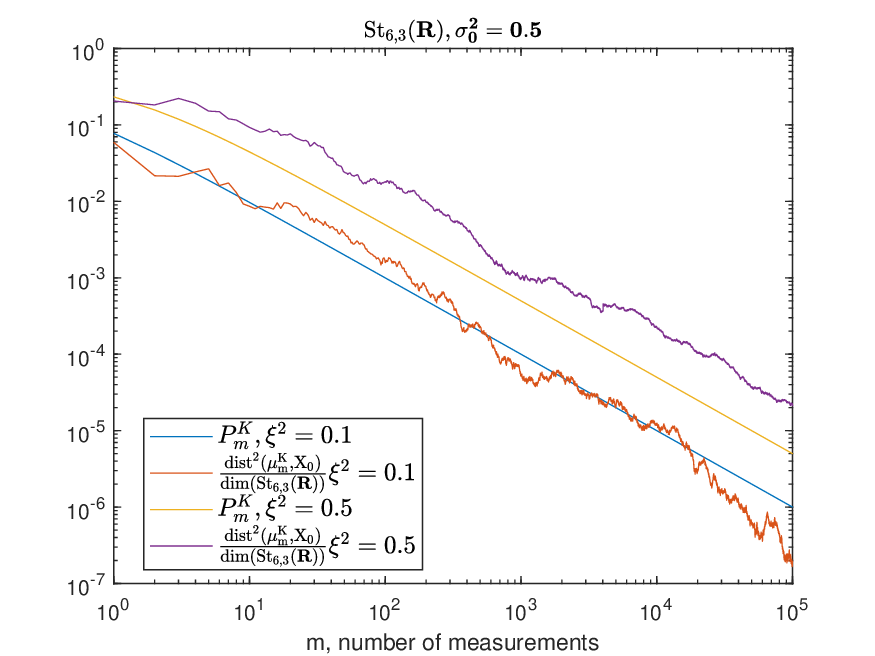}
        \caption{$\sigma_0^2=0.5$.}
    \end{subfigure}
\end{figure}
\begin{figure}[H]
\ContinuedFloat
    \begin{subfigure}{0.85\textwidth}
        \includegraphics[width=\linewidth]{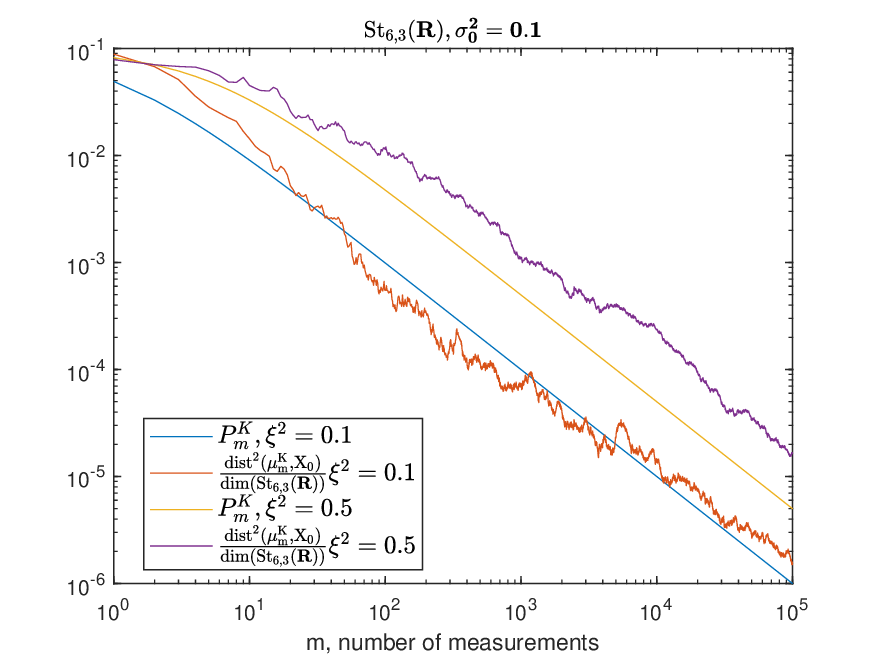}
        \caption{$\sigma_0^2=0.1$.}
    \end{subfigure}
    \caption{ Simulations of the extended Kalman filter on $\St_{6,3}(\mathbb{R})$ given in Algorithm \ref{alg:kalmansphere}. The intrinsic scalar variance and the normalized experimental distance squared from the filtered mean to the true point over $m$ measurements. In each figure one can see both given $\xi^2=0.1$ and $\xi^2=0.5$.}
    \label{fig:St63}
\end{figure}

\begin{figure}[H]
    \centering
    \begin{subfigure}{0.85\textwidth}
        \includegraphics[width=\linewidth]{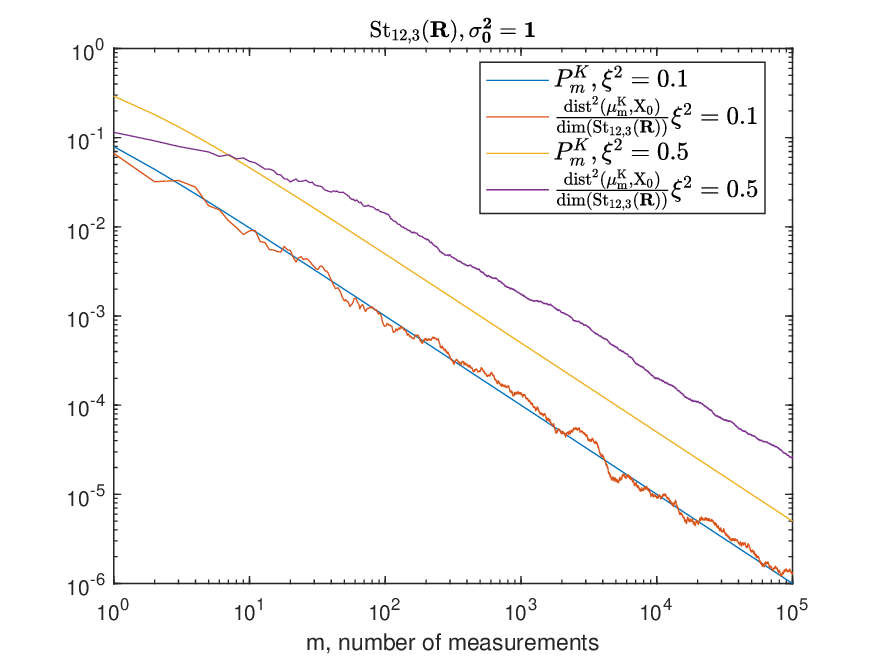}
        \caption{$\sigma_0^2=1$.}
    \end{subfigure}
\end{figure}
\begin{figure}[H]
\ContinuedFloat
    \begin{subfigure}{0.85\textwidth}
        \includegraphics[width=\linewidth]{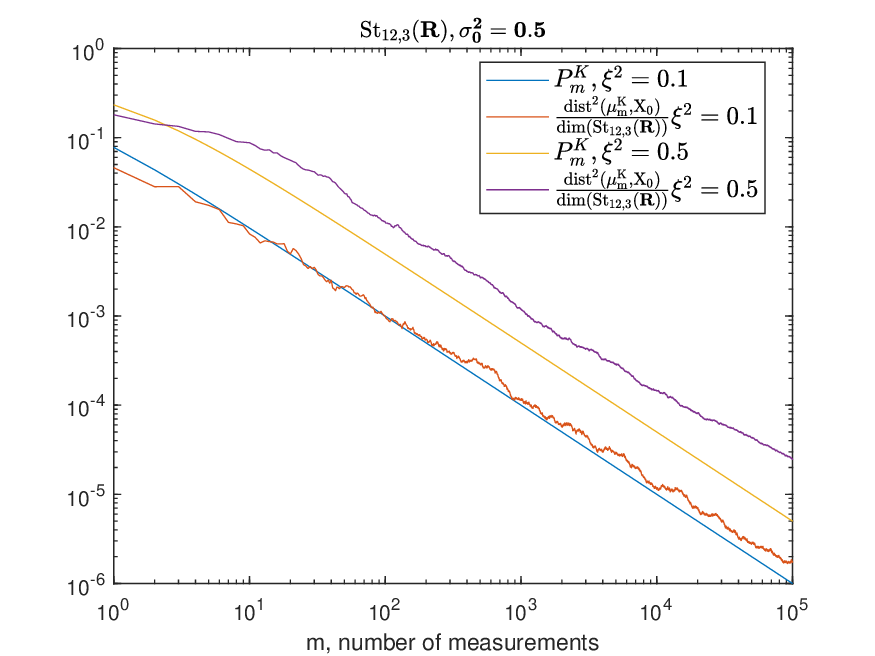}
        \caption{$\sigma_0^2=0.5$.}
    \end{subfigure}
\end{figure}
\begin{figure}[H]
\ContinuedFloat
    \begin{subfigure}{0.85\textwidth}
        \includegraphics[width=\linewidth]{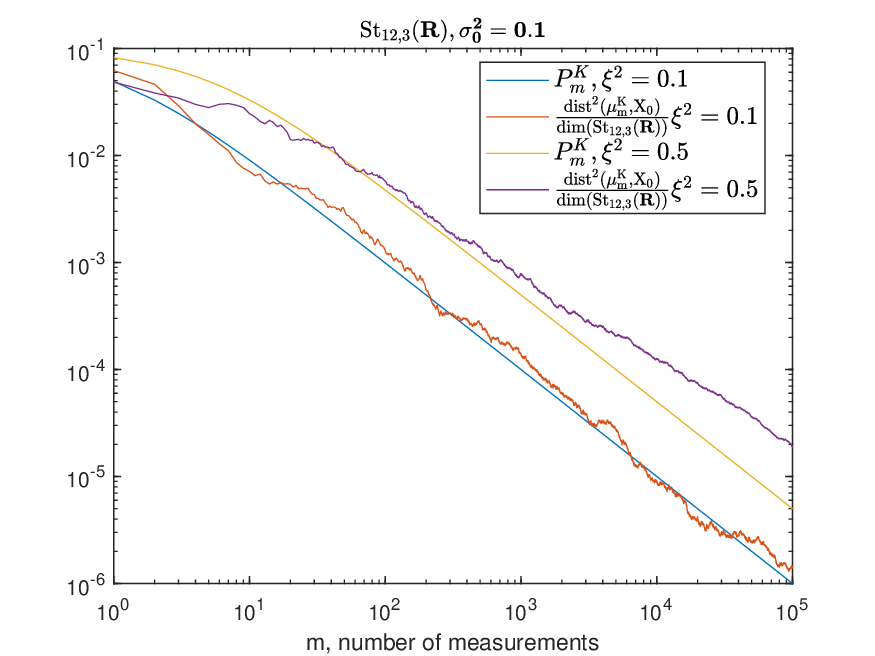}
        \caption{ $\sigma_0^2=0.1$.}
    \end{subfigure}
    \caption{Simulations of the extended Kalman filter on $\St_{12,3}(\mathbb{R})$ given in Algorithm \ref{alg:kalmansphere}. The intrinsic scalar variance and the normalized experimental distance squared from the filtered mean to the true point over $m$ measurements. In each figure one can see both given $\xi^2=0.1$ and $\xi^2=0.5$.}
    \label{fig:St123}
\end{figure}

\begin{figure}[H]
    \centering
    \begin{subfigure}{0.85\textwidth}
        \includegraphics[width=\linewidth]{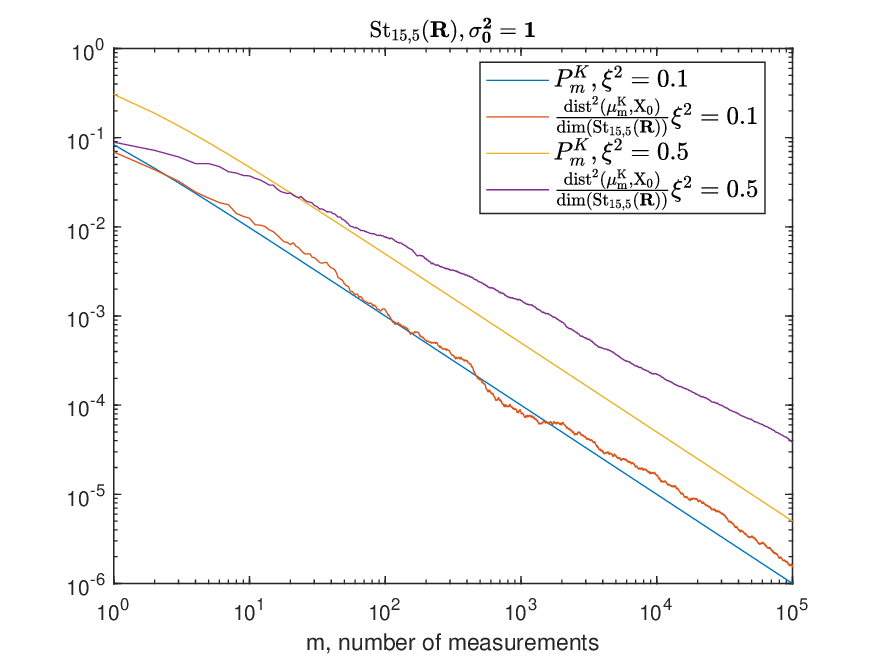}
        \caption{\small $\sigma_0^2=1$.}
    \end{subfigure}
\end{figure}
\begin{figure}[H]
\ContinuedFloat
    \begin{subfigure}{0.85\textwidth}
        \includegraphics[width=\linewidth]{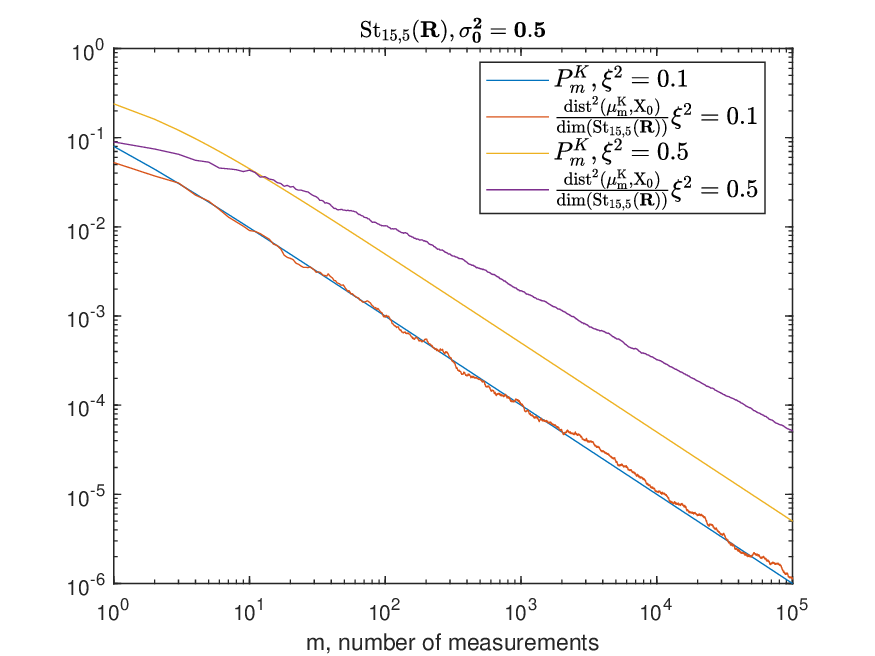}
        \caption{ $\sigma_0^2=0.5$.}
    \end{subfigure}
\end{figure}
\begin{figure}[H]
\ContinuedFloat
    \begin{subfigure}{0.85\textwidth}
        \includegraphics[width=\linewidth]{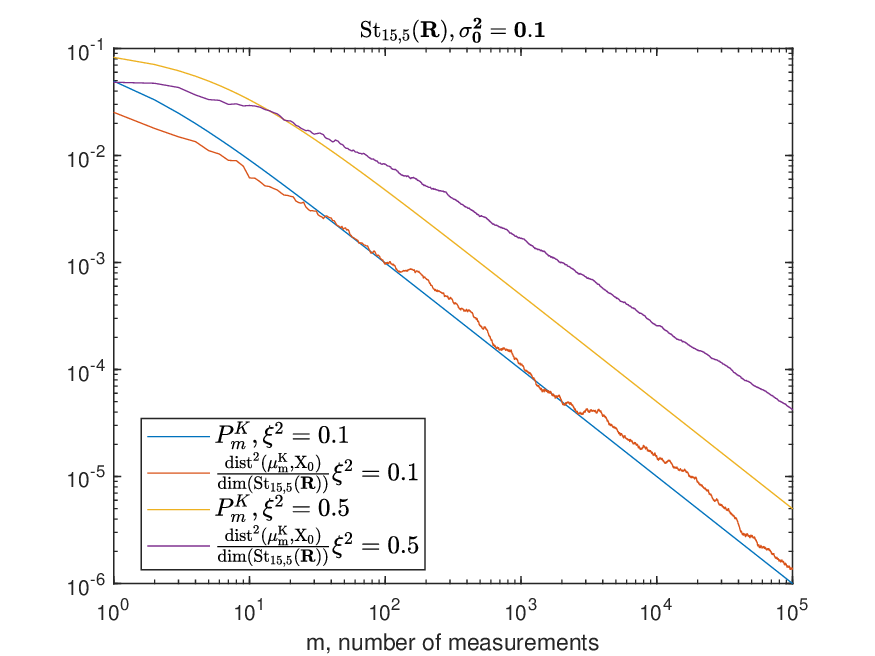}
        \caption{  $\sigma_0^2=0.1$.}
    \end{subfigure}
    \caption{\small Simulations of the extended Kalman filter on $\St_{15,5}(\mathbb{R})$ given in Algorithm \ref{alg:kalmansphere}. The intrinsic scalar variance and the normalized experimental distance squared from the filtered mean to the true point over $m$ measurements. In each figure one can see both given $\xi^2=0.1$ and $\xi^2=0.5$.}
    \label{fig:St155}
\end{figure}

\printbibliography

\end{document}